\title{\textbf{A Spectral ``Schematization" of Homotopy Types}}
\author{Renaud Gauthier \footnote{2020 MSC: 14F35, 55P65, 55Q05, 55Q70. Keywords: spectral homotopy type, generalized homotopy group, schematic homotopy type, spectra.} \\ \\}
\theoremstyle{definition}
\newtheorem{fctrchg}{Proposition}[subsection]
\newtheorem{contr}[fctrchg]{Proposition}
\newtheorem{discr}[fctrchg]{Proposition}
\newtheorem{prodct}[fctrchg]{Proposition}
\newtheorem{path}[fctrchg]{Proposition}
\newtheorem{colimit}[fctrchg]{Proposition}
\newtheorem{Frd}[fctrchg]{Theorem}
\newtheorem{basechg}[fctrchg]{Proposition}
\DeclareMathOperator*{\colim}{\text{colim}}
\newcommand{\beq}{\begin{equation}}
\newcommand{\eeq}{\end{equation}}
\newcommand{\hrarr}{\hookrightarrow}
\newcommand{\rarr}{\rightarrow}
\newcommand{\larr}{\leftarrow}
\newcommand{\Rarr}{\Rightarrow}
\newcommand{\xrarr}{\xrightarrow}
\newcommand{\cC}{\mathcal{C}}
\newcommand{\cM}{\mathcal{M}}
\newcommand{\cS}{\mathcal{S}}
\newcommand{\bR}{\mathbb{R}}
\newcommand{\bS}{\mathbb{S}}
\newcommand{\bZ}{\mathbb{Z}}
\newcommand{\Hom}{\text{Hom}}
\newcommand{\Ho}{\text{Ho}\,}
\newcommand{\Map}{\text{Map}}
\newcommand{\op}{\text{op}}
\newcommand{\Set}{\text{Set}}
\newcommand{\Spec}{\text{Spec\,}}
\newcommand{\Top}{\text{Top}}
\newcommand{\uHom}{\underline{\Hom}}
\newcommand{\AfX}{A \xrightarrow{f} X}
\newcommand{\BgY}{B \xrightarrow{g} Y}
\newcommand{\bfH}{\mathbf{H}}
\newcommand{\cDot}{\centerdot}
\newcommand{\Comm}{\text{Comm}}
\newcommand{\Dop}{\Delta^{\op}}
\newcommand{\fpinSpR}{f_*\pi_n(\Spec R)}
\newcommand{\Grp}{\text{Grp}}
\newcommand{\kCAlg}{k\text{-CAlg}}
\newcommand{\kCAlgpft}{\kCAlg_{pft}}
\newcommand{\kMod}{k\text{-Mod}}
\newcommand{\kAff}{k\text{-Aff}}
\newcommand{\kAffpft}{\kAff_{pft}}
\newcommand{\oT}{\otimes}
\newcommand{\pinSpRfX}{\pi_n(\Spec R \xrightarrow{f} X)}
\newcommand{\pinSpR}{\pi_n(\Spec R)}
\newcommand{\RuHom}{\bR \uHom}
\newcommand{\SpR}{\Spec R}
\newcommand{\Siginf}{\Sigma^{\infty}}
\newcommand{\SetD}{\Set_{\Delta}}
\newcommand{\SpS}{Sp^{\Sigma}}
\newcommand{\uA}{\underline{A}}
\newcommand{\SsS}{\cS_*^{\Sigma}}
\newcommand{\MapSsS}{\Map_{\SsS}}
\newcommand{\MapSpS}{\Map_{\SpS}}
\begin{document}
\maketitle

\begin{abstract}
	Toen has interpreted the schematization problem as originally imagined by Grothendieck (\cite{G2}) in such a way that solution(s) to this problem could be given. As he pointed out, there are many solutions available, and in \cite{T2}, \cite{T3}, \cite{T4} he gave two constructions solving this problem. What we do in the present work is reconsider what Grothendieck initially had in mind and develop a formalism that provides a concept of ``schematization" and corresponding homotopy groups, leading to a notion of ``schematized" homotopy types. In our view this can be realized if we generalize topological spaces to symmetric spectra, and the objects $\MapSsS(\bS, X)$ of $\Ho(\SetD)$ play the role of homotopy groups for spectra $X$, $\mathbb{S}$ the sphere spectrum, $\SsS$ the category of symmetric sequences.
\end{abstract}

\newpage

\section{Introduction}
It is important to note that the present work is in no way an addendum, nor does it supplant Toen's work on the schematization of homotopy types, which is very complete. Our aim is modest; we wanted to develop a formalism in which a schematization of homotopy types would make sense, along with an accompanying notion of homotopy groups from which such homotopy types would be defined, starting from very simple considerations. In doing so we arrived at a notion of homotopy groups of objects in a comma category, generalizing the classical homotopy groups of spaces, something that we found to be of interest to algebraic topologists. For this reason, after discussing a schematization of homotopy types proper, we study those generalized homotopy groups in a first part of the present work. In a second part, we systematize such an approach, and we find ourselves abandoning the base model category $\kCAlg$ in favor of the model category of symmetric spectra $\SpS$. Thus what we obtain at the onset should really be called a spectralization of homotopy types.\\

What Grothendieck started with is the observation in \cite{G2} that homotopy types are essentially discrete structures, so one may as well start our investigation with a free $\bZ$-module $M$ of finite type. It gives rise to a vector bundle over $S_0 = \Spec \bZ$ which he denoted by $W(M)$, whose $\bZ$-module of sections is $M$ itself. The functor $M \mapsto W(M)$ is fully faithful, the point being that this provides an embedding of homotopy types into a category of geometric objects such as bundles. He then considered any $\bZ$-module $M$, i.e., an abelian group. In that case our vector bundle should be defined as $W(M): k \mapsto M \otimes_{\bZ} k$ a functor on the category of commutative $\bZ$-algebras, still very close to being a vector bundle. That functor $M \mapsto W(M)$ is still faithful. This is this structure Grothendieck pulled back to spaces via torsors. What he considered then is an auxiliary category $U(k)$ of unipotent bundles over $k$, not quite schemes over $k$. They are functors $\kCAlg = (\kAff)^{\op} \rarr \Set$, i.e. presheaves on $\kAff$. If we denote by $\tau$ the fpqc topology on $\kAff$, then we consider the model category of sheaves on $\kAff$ that Toen denotes by $\kAff^{\sim, \tau}$, and we're looking for a sub-model category:
\beq
U(k) \subset \kAff^{\sim, \tau} \subset \widehat{\kAff} \nonumber
\eeq

\newpage
Grothendieck defined $U(k)$ as the union of an increasing sequence of subcategories $U_n(k)$. He defined $U_{n+1}(k)$ as follows: for any $k$-module $M$, the corresponding vector bundle is defined via $W(M)(k') = M \otimes_k k'$, and $X_{n+1} \in U_{n+1}(k)$ if and only if there is some $X_n \in U_n(k)$ and some $k$-module $M$ such that $X_{n+1}$ is isomorphic to a torsor over $X_n$ with group $W(M)$. In other terms, the initial embedding of a discrete structure $M$ into the category of schemes, or vector bundles as he saw them, this is pulled back to $U(k)$, thereby endowing the latter with the flavor of a schematic homotopy type. Later in his same work Grothendieck will observe that the modules $M$ have to be projective.\\

Recall that the sections functor on $W(M)$ gives us back $M$, which is supposed to represent a homotopy type. Thus we would like to have a sections functor in the present setting as well to extract the homotopy type information from objects $X$ of the category $U(k)$. It turns out we do have a sections functor $U(k) \rarr \Set: X \mapsto X(k)$ that induces a map:
\beq
\Hom(\Dop, U(k)) \rarr \Hom(\Dop, \Set) \nonumber
\eeq
Grothendieck initially argued that since the right hand side should model homotopy types, this makes simplicial objects of $U(k)$ into homotopy theoretic objects, which means we have an implicit notion of weak equivalence in $sU(k)$, hence a corresponding homotopy category. More specifically, he considered a subcategory $\cM(k)$ of $U(k)$ whose objects have homotopy groups endowed with a $k$-module structure, something that he will drop later, but nevertheless the idea remains: one should aim for a subcategory of $sU(k)$ in a sense to be precised. Then the notion of weak equivalence we have above should lead to a homotopy category $Hot_n(k)$ of $n$-connected schematic homotopy types over $k$, homotopy types because of a yet to be precised notion of weak equivalence, schematic because objects in $U(k)$ have the flavor of schemes insofar as they are torsors with groups $W(M)$ that initially were viewed as scheme-like objects. This is the picture we have to keep in mind. Much of the work Grothendieck did from there revolved around defining a proper notion of homotopy. The first hint that this preliminary picture may not work comes from the very definition of $\cM(k)$; how could one construct homotopy groups endowed with a non-trivial $k$-module structure? Looking elsewhere, Grothendieck pointed out that we do have a classical abelianization functor for homotopy types $Hot_n \rarr D_{\cDot}(Ab)$. \\

One should then have a commutative diagram:
\beq
\begin{CD}
Hot_n(k) @>>> D_{\cDot}(Ab_k)  \nonumber \\
@VVV @VVV \nonumber \\
Hot_n @>>> D_{\cDot}(Ab)
\end{CD}
\eeq
hence we need an abelianization, or linearization functor $Hot_n(k) \rarr D_{\cDot}(Ab_k)$, a localization of a functor $\cM_n(k) \rarr ch_{\cDot}(Ab_k)$. It is at this point that Eilenberg-Mac Lane spaces make their appearance, essentially as linearized, schematic homotopy types. Ultimately, Grothendieck was aiming for a $k$-linearization functor $L: U(k) \rarr Ab_k, X \simeq W(M) \mapsto L(X)$. Defining $L(X)$ necessitates taking completions, something he wanted to avoid, so he considered pointed homotopy types instead, something that is quite natural. Hence he considered a category $U(k)^{\centerdot}$ of pointed homotopy types instead of $U(k)$, with a corresponding $k$-linearization functor $L_{pt}$ that is well-behaved with regards to ring extensions. For $X_* \in sU(k)^{\centerdot}$ a simplicial object in $U(k)^{\centerdot}$, one defines its homology by $LH_{\centerdot}(X_*) = L_{pt}(X_*)$ and one says $X_* \rarr X_*'$ is a weak equivalence if the total homology functor $LH_{\centerdot}$ transforms this map into a quasi-isomorphism. Later Grothendieck defines $H_i(X_*):= \pi_i(L_{pt}(X_*)) = H_i(LH_{\centerdot}(X_*))$ something that he regarded as a much better way to define a notion of weak equivalence than the one just introduced. At some point later though Grothendieck introduced a Lie functor $Lie: U(k)^{\centerdot} \rarr Ab_k, X = W(M) \mapsto M$ with the usual homotopy groups $\pi_i(X_*) = \pi_i(X_*(k))$ satisfying $\pi_i(X_*) \simeq \pi_i(Lie(X_*))$ which he regarded as the natural definition of homotopy invariance of $X_*$, hence a notion of weak equivalence defined as producing quasi-isomorphisms for the corresponding Lie chain complexes. Even though this is yet another notion of weak equivalence which he considered as a cornerstone of a theory of schematic homotopy types, he will admit later that a notion of weak equivalence is not agreed upon. Thus the very definition of schematic homotopy type is not settled.\\

Regarding homotopy groups, them being $k$-modules will be dropped quite fast by Grothendieck, but one thing he will keep though is that higher homotopy groups $\pi_n$ for $n>1$ are endowed with an action of $\pi_1$, hinting at the fact that  homotopy groups themselves may possibly have to be upgraded to something more general. This is also apparent in the fact that since the beginning of his work on the schematization problem, he wanted to use Postnikov devissage to define schematic homotopy types, something he will find not to be too useful, yet not completely abandon, something he admitted later since it was constantly on his mind, and at times led him astray. This also points to the fact that homotopy groups carry an additional structure. It needs to be mentioned as well that Grothendieck at some point considered $\pi_1$ to be a group object in $U(k)$.\\

From this short synopsis one gets the flavor of what Grothendieck had in mind regarding schematic homotopy types, even though the very definition of weak equivalence to develop such a formalism remains elusive, and one cannot help but notice how homotopy groups themselves should probably be generalized.\\

One may summarize the above approach as working with schematic objects, containing some homotopy theoretic information, extracted in some sense with a sections functor, and whose intrinsic homotopic information is revealed via a notion of equivalence, defined as inducing quasi-isomorphisms for a well-chosen cohomology theory. What we will do instead of using a sections functor is work with covers of geometric objects $X$ by spectra of algebraic objects of finite type, thereby giving $X$ a sense in which it can be regarded as being schematic, and a homotopy theoretic object as well. The homotopy types proper will be defined exactly as Grothendieck defined them, that is as being dependent upon a notion of $\pi_*$-equivalence.\\

To talk about homotopy type one needs to have a model category in mind, or at the very least a category with a subclass of weak equivalences, and speaking of spaces, we are looking for a homotopy category of spaces. For a space $X$, speaking of schematic homotopy type over a ground ring $k$ means, from our perspective, putting $X$ and $k$ on a same footing for them to be comparable, hence in a first time we have to explain how $k$ could be seen as an object of a given homotopy category of spaces. This is fairly immediate: we consider the spectrum $\Spec k$ of a ring $k$, appropriately topologized. Now $X$ being given, and looking for a notion of homotopy type over $k$, we consider morphisms $\Spec R \rarr X$ in $\Top$ for $R \in \kCAlg$ as providing such a notion, a schematic cover of $X$ in a sense. Now recall that the modules $M$ Grothendieck started with are supposed to be homotopy types, and for this reason they are taken to be projective modules of finite type. Thus in the present context $R \in \kMod$ is projective of finite type. Let $\kCAlgpft = \Comm(\kMod_{pft})$ with obvious notations. Our cover can be formally represented as a functor:
\beq
	\Hom_{\Top}(\Spec(-), X): \kCAlgpft \rarr \Set \nonumber
\eeq
For the sake of extracting the homotopy theoretic information in such covers, we introduce a notion of homotopy group of coverings, by defining:
\beq
\pinSpRfX = \fpinSpR \nonumber
\eeq
This presupposes that we work with pointed spaces, and this will be implied throughout this work. Functorially this can be represented by a graded functor $\pi_*(\Spec(-) \rarr X)= \pi^k_{*,X}$ and weak equivalences are defined as $\pi_*$-equivalences, namely $X \sim X'$ if $\pi^k_{*,X} \cong \pi^k_{*,X'}$. In a first part of the present work, we will derive a few properties of such homotopy groups. In a second part, we will generalize those homotopy groups; we have $\pi^k_{n,X}(R) = \pi_n(\Spec R \xrarr{f} X) = f_* \pi_n(\Spec R)$, for $R \in \kCAlgpft$. Focusing on $\pi_n(\Spec R)$, if we want this group to be representable, we are looking for some $R_0 \in \kCAlgpft$ so that $\pi_n = \Hom_{\kCAlgpft}(-, R_0)_n = \Hom_{\kAffpft}(\Spec R_0, -)_n$, with $\kAffpft = \kCAlgpft^{\op}$. Written differently:
\beq
[S^n, \Spec R] =\pi_n(\Spec R) =  \Hom_{\kAffpft}(\Spec R_0, \Spec R)_n \nonumber \\
\eeq
which can be implemented by taking $R_0$ to be a graded object and the Hom object to be in some homotopy category. One can immediately think of spectra with $\Spec R_0$ a component of the sphere spectrum $\Siginf S^0 = \bS$, and $\Spec R$ being replaced by a component of an $E_{\infty}$-ring. Thus the functor $\pi_* = [S^*, -]$ should be replaced by $\prod_p \Map(S^p,-) = \Map(\bS,-)$ in a sense to be precised. Note that at this point there is a clear departure from working with objects $R \in \kCAlg$, since the most basic objects are symmetric spectra. Instead of topological spaces, we will consider simplicial sets $K$ and their symmetric suspension spectrum $\Siginf K = X$. Homotopy types will be represented by symmetric spectra of finite type, defined as being spectra for which its simplicial components are finite. We consider coverings of finite type $\{\bfH \rarr X \}$ for $\bfH \in \SpS_{ft}$, functorially represented as $\RuHom(-,X): \Ho(\SpS_{ft}) \rarr \Ho(\SpS)$, a perception of finite type, where $\SpS$ is endowed with the positive model structure of \cite{S}, $\uHom$ is the internal Hom of $\SpS$, and $\RuHom$ its derived Hom. For $f: \bfH \rarr X$, we have an induced map $\Ho(f_*): \Map(\bS, \bfH) \rarr \Map(\bS, X)$ of simplicial sets, and the collection of subobjects of the form $\Ho(f_*) \Map(\bS, \bfH)$ defines a functor $\Pi_X$. Then one defines the following weak equivalence: $X \sim X'$ in $\SpS$ if and only if $\Pi_X \cong \Pi_{X'}$. This gives us a notion of homotopy type, defined via symmetric spectra of finite type covering $X$ (resp. $X'$). Thus we have a spectral interpretation of the schematization of homotopy types.\\ 

Notations: we will denote by $\Hom_{\cC}(X,Y)$ the hom sets between objects $X$ and $Y$ of a category $\cC$, by $\Map_{\cC}(X,Y)$ the mapping space, a simplicial set, if $\cC$ is simplicial, and by $\uHom_{\cC}(X,Y)$ the internal Hom. If $\cC$ is a model category, we use the notation $\RuHom$ to denote the derived Hom. Typically we use the notation $\SetD$ for the category of simplicial sets, but for symmetric sequences we will use the compact notation $\SsS$ instead of the more cumbersome $\Set_{\Delta,*}^{\Sigma}$.

\section{Part I: Algebraic Topology}
\subsection{Generalized homotopy groups}
As pointed out in the introduction, for $X$ a given topological space, $k$ a commutative ring, to talk about the homotopy type of $X$ over $k$ one would want to put $X$ and $k$ on a same footing to be able to define invariants of one object relative to the other. Thus it is natural to consider morphisms $\Spec R \rarr X$ for $R \in \kCAlg$. In addition we ask that those $k$-algebras be projective of finite type. The image of that morphism tells us what is $X$ relative to $R$. The collection of all such maps provides an affine covering family $\{ \Spec R \rarr X \, | \, R \in \kCAlgpft \}$. This can further be formalized by considering the functor:
\beq
	\Hom_{\Top}(\Spec(-),X): \kCAlgpft \rarr \Set \nonumber
\eeq
which we defined as being the affine perception in Spectral Algebraic Geometry (\cite{RG3}).\\

A morphism $\Spec R \rarr X$ is regarded as an object of $\Top_{/X}$, and we generalize homotopy groups of spaces to homotopy groups of morphisms of spaces as follows. Consider the (local system) functor:
\begin{align}
\pi_n: \Top & \rarr \Grp^{\Top/\centerdot} \nonumber \\
X & \mapsto \pi_n(X):=\pi_{nX} \nonumber
\end{align}
where:
\begin{align}
\pi_{nX}: \Top_{/X} &\rarr \Grp \nonumber \\
(Y \xrightarrow{f} X) & \mapsto \pi_{nX}(Y \xrightarrow{f} X) = f_* \pi_n(Y) \nonumber
\end{align}

We will drop the subscript $X$ in what follows. Observe that the classical homotopy group is recovered as $\pi_n(X \xrightarrow{id} X) = id_* \pi_n(X) = \pi_n(X)$, where $X \xrightarrow{id} X$ is the terminal object in $\Top_{/X}$, so the classical picture can be seen as being embedded in this more general formalism. Observe that one may further generalize this by letting $Y$ itself in $\pi_n(Y \xrightarrow{f} X) = f_* \pi_n(Y)$ be an object in a comma category, with the present case above being recovered by letting $Y = Y$. Indeed:
\begin{align}
\pi_n(A \xrightarrow{f} B \xrightarrow{g} X) &= \pi_n(A \xrightarrow{g \circ f} X) \nonumber \\
&= (g \circ f)_* \pi_n(A) \nonumber \\
&= g_* \circ f_* \pi_n(A) \nonumber \\
&= g_* \pi_n(A \xrightarrow{f} B) \nonumber
\end{align}
providing a generalization of $\pi_n(Y \xrarr{g} X) = g_* \pi_n(Y)$ to the case where $Y$ is an object in a comma category.\\

We will be interested in the affine part of this functor, namely $\pi_{n,X} = \pi_n (\Spec (-) \rarr X)$, which we regard as a functor $\pi_n(- \rarr X)$ on $\kAffpft \times_{\kCAlgpft} X^{\kAffpft}$ whose objects are pairs $(\Spec R,  \Spec R \xrarr{f} X)$. However, $\pi_{n,X}$ is no bifunctor, it is really a functor in a comma category. A morphism $(\Spec R, f) \rarr (\Spec R' , f')$ in this category is a morphism $u: \Spec R \rarr \Spec R'$ that makes the following diagram commute:
\beq
\xymatrix{
	\Spec R \ar[dd]_u \ar[dr]^f \\
	&X \\
	\Spec R' \ar[ur]_{f'}
} \nonumber
\eeq
this induces the following commutative diagram of homotopy groups:
\beq
\xymatrix{
	\pi_n\Spec R \ar[dd]_{u_*} \ar[dr]^{f_*} \\
	& \pi_n X \\
	\pi_n \Spec R' \ar[ur]_{f'_*}
} \nonumber
\eeq
Thus we have a covariant functor:
\begin{align}
	\pi_{n,X}: \kAffpft \times_{\kCAlgpft} X^{\kAffpft} & \rarr \Grp \nonumber \\
	(\Spec R,  \Spec R \xrarr{f} X) & \mapsto f_* \pi_n \Spec R \nonumber
\end{align}

Thus defined, $\pi_{*,X}$ gives us a schematic probe of the homotopic character of $X$. Define two spaces $X$ and $X'$ to be equivalent if they are $\pi_*$-equivalent, that is $\pi_{*,X} \cong \pi_{*,X'}$. This provides us with a notion of weak equivalence, based on schematic objects, hence a notion of schematic homotopy type.\\

\subsection{Properties of homotopy groups}
We use mainly \cite{M} as a resource to develop our formalism. In terms of loop spaces, one can write:
\beq
\pi_n(\AfX) = f_* \pi_n(A) = f_* \pi_{n-1} \Omega A = \cdots = f_* \pi_0 (\Omega^n A) \nonumber
\eeq
For the purposes of having long exact sequences of homotopy groups, we consider, for $\AfX$, the following homotopy fiber:
\beq
Ff = A \times_f PX = \{(a, \xi) \; | \; f(a) = \xi(1) \} \subset A \times PX \nonumber
\eeq
with $PX = F(I,X)$, $I$ based at 0, $*$ fixed point of $X$, so $Ff$ consists of paths $\gamma: * \rarr f(A) \subset X$, i.e. $Ff = P(X;*, f(A))$, the space of paths in $X$ starting at the fixed point $*$ and ending in $f(A)$. We have a natural projection $\pi: Ff \rarr A, (x, \chi) \mapsto x$. We also have an inclusion $\iota: \Omega X \rarr Ff$ specified by $\iota(\xi) = (y, \xi)$, $y \in f(A)$. This gets us a fiber sequence generated by $f$ as given in \cite{M}:
\beq
\cdots \rarr \Omega^2A \xrightarrow{\Omega^2 f} \Omega^2 X \xrightarrow{-\Omega \iota} \Omega Ff \xrightarrow{ -\Omega \pi} \Omega A \xrightarrow{-\Omega f} \Omega X \xrightarrow{\iota} Ff \xrightarrow{ \pi} \AfX \nonumber
\eeq
where as defined in \cite{M}, $(-\Omega f)(\eta)(t) = (f \circ \eta)(1-t)$ for $\eta \in \Omega A$. From there we get a long exact sequence of pointed sets for any based space $Z$:
\beq
 \cdots \rarr [Z,\Omega Ff] \rarr [Z, \Omega A] \rarr [Z, \Omega X] \rarr [Z, Ff] \rarr [Z, A] \rarr [Z, X] \nonumber
\eeq
Then we let $Z = S^n$ and $[S^n, -] = \pi_n$ to arrive at:
\beq
\setlength{\unitlength}{0.5cm}
\begin{picture}(34,11)(0,0)
\thicklines
\put(-.2,10){$\cdots$}
\put(3.5,10){$\pi_n \Omega Ff$}
\put(9,10){$\pi_n \Omega A$}
\put(14,10){$\pi_n \Omega X$}
\put(19,10){$\pi_n Ff$}
\put(24,10){$\pi_n A$}
\put(28.5,10){$\pi_n X$}
\put(3.5,7){$\pi_{n+1} Ff$}
\put(9,7){$\pi_{n+1} A$}
\put(14,7){$\pi_{n+1}X $}
\put(17.5,7){$\pi_n P(X; *, f(A))$}
\put(1.5,4){$\pi_{n+1} P(X;*, f(A))$}
\put(18,4){$\pi_{n+1}(X, f(A))$}
\put(2.5,1){$\pi_{n+2}(X, f(A))$}
\put(1,10.2){\vector(1,0){2}}
\put(6.5,10.2){\vector(1,0){2}}
\put(11.5,10.2){\vector(1,0){2}}
\put(16.5,10.2){\vector(1,0){2}}
\put(21.5,10.2){\vector(1,0){2}}
\put(26,10.2){\vector(1,0){2}}
\put(4.5,8){\line(0,1){1.3}}
\put(4.8,8){\line(0,1){1.3}}

\put(10,8){\line(0,1){1.3}}
\put(10.3,8){\line(0,1){1.3}}

\put(15,8){\line(0,1){1.3}}
\put(15.3,8){\line(0,1){1.3}}

\put(20,8){\line(0,1){1.3}}
\put(20.3,8){\line(0,1){1.3}}

\put(4.5,5){\line(0,1){1.3}}
\put(4.8,5){\line(0,1){1.3}}

\put(20,5){\line(0,1){1.3}}
\put(20.3,5){\line(0,1){1.3}}

\put(4.5,2){\line(0,1){1.3}}
\put(4.8,2){\line(0,1){1.3}}
\end{picture} \nonumber
\eeq
in simplified form:
\beq
\cdots \rarr \pi_n A \xrightarrow{f_*} \pi_n X \rarr \pi_n(X, fA) \xrightarrow{\partial} \pi_{n-1}A \rarr \cdots \rarr \pi_0 A \rarr \pi_0 X \nonumber
\eeq
Coming back to the definition of homotopy groups of objects in comma categories, $\pi_n(A \xrightarrow{f} X) = f_* \pi_n A$ is the image of $\pi_n A \xrightarrow{f_*} \pi_n X$, so $\pi_n A \twoheadrightarrow \pi_n(\AfX)$, and because we have a long exact sequence, it follows that:
\beq
\pi_n A / \partial \pi_{n+1}(X, fA) \cong \pi_n (\AfX) \label{pidelpif}
\eeq
We would like now to answer the question as to whether a map $A \xrightarrow{\tau} B$ induces a map $\pi_n(\AfX) \rarr \pi_n(B \xrightarrow{g} X)$. In order to do so we use the standard definition for morphisms of objects of $\Top_{/X}$. For later purposes, we consider the following general case: for two objects $\AfX$ and $\BgY$, we define morphisms between those objects as being given by a commutative diagram $(\tau,h)$ of the form:
\beq
\xymatrix{
	A \ar[d]_f \ar[r]^{\tau} & B \ar[d]^g  \\
	X \ar[r]_h &Y 
} \label{CD1}
\eeq
\begin{fctrchg}
	Given $A \xrarr{f} X$ and $B \xrarr{g} Y$, a map $(\tau,h): (A,X) \rarr (B,Y)$ as in \eqref{CD1} induces a morphism $\pi_n(\AfX) \xrightarrow{(\tau,h)_*} \pi_n(\BgY)$. 
\end{fctrchg}
\begin{proof}
By commutativity of \eqref{CD1}, we have the following commutative diagram:
\beq
\begin{CD}
\pi_{n+1}(X, fA) @>\partial>> \pi_n A @>f_*>> \pi_n X \nonumber \\
@. @V\tau_*VV @VVh_*V \nonumber \\
\pi_{n+1}(Y,gB) @>>\partial> \pi_n B @>>g_*> \pi_n Y \nonumber
\end{CD}
\eeq
from which we have:
\begin{align}
\partial \pi_{n+1}(X,fA) = \text{Ker} f_* \subset \text{Ker } h_* \circ f_* &= \text{Ker} g_* \circ \tau_* \nonumber \\
&=\{u \in \pi_n A, \; \tau_*u \in \text{Ker} g_* \} \nonumber \\
&=\{ u \in \pi_n A, \; \tau_*u \in \partial \pi_{n+1}(Y,gB) \} \nonumber \\
&= \tau_*^{-1} \partial \pi_{n+1}(Y, gB) := H \nonumber
\end{align}
that is, $\partial \pi_{n+1}(X, fA) \subset H$. Now we have a map:
\beq
\pi_n A \rarr \pi_n B \twoheadrightarrow \pi_n B / \partial \pi_{n+1}(Y, gB) \nonumber \\
\eeq
and $H = \tau_*^{-1} \partial \pi_{n+1}(Y, gB) \vartriangleleft \pi_n A$, so there is a map:
\beq
\pi_n A /H \rarr \pi_n B / \partial \pi_{n+1}(Y, gB) \nonumber
\eeq
and since we have shown $\partial \pi_{n+1}(X, fA) \subset H$, it follows we also have a map:
\beq
\pi_n A/ \partial \pi_{n+1}(X, fA) \rarr \pi_n A/H \nonumber
\eeq
and by composition we get a map:
\beq
\pi_n A/ \partial \pi_{n+1}(X, fA) \rarr \pi_n B / \partial \pi_{n+1}(Y, gB) \nonumber
\eeq
that is, a morphism $(\tau, h)$ from $\AfX$ to $\BgY$ induces a map $\pi_n(\AfX) \rarr \pi_n(\BgY)$ by \eqref{pidelpif}. 
\end{proof}

\newpage
\begin{contr}
If $X$ is contractible, for any $\AfX$, then $\pi_n(\AfX) = 0$.
\end{contr}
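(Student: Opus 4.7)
The plan is immediate from the definition. By \eqref{piiX} we have $\pi_n(A \xrightarrow{f} X) = f_*\pi_n(A)$, which is by construction a subset (image) of $\pi_n(X)$. Since $X$ is contractible, $\pi_n(X) = 0$ for every $n \geq 0$, so $f_*\pi_n(A) \subset \pi_n(X) = 0$ and the claim follows at once. This is the entire argument; there is really nothing more to check.

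An equivalent route, more in the spirit of the machinery developed just above the statement, would be to invoke the isomorphism $\pi_n A / \partial \pi_{n+1}(X, fA) \cong \pi_n(A \xrightarrow{f} X)$ already derived from the long exact sequence of the pair $(X, fA)$. Contractibility forces $\pi_n X = 0$, so $f_* : \pi_n A \to \pi_n X$ is the zero map; by exactness $\partial : \pi_{n+1}(X, fA) \to \pi_n A$ is surjective, and the quotient $\pi_n A / \partial \pi_{n+1}(X, fA)$ collapses to the trivial object, recovering the result.

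There is no real obstacle here. The only mild point of care is at $n = 0$ and $n = 1$, where $\pi_n$ is only a pointed set or a nonabelian group rather than an abelian group; in those cases ``$\pi_n(A \xrightarrow{f} X) = 0$'' is read as saying that the image of $f_*$ reduces to the basepoint or the identity element, and the one-line argument via the definition applies uniformly across all $n \geq 0$ without modification.
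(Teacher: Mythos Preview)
Your first paragraph is exactly the paper's proof: $\pi_n(A \xrightarrow{f} X) = f_*\pi_n A \subset \pi_n X = 0$. The alternative route via the long exact sequence and the remark about $n=0,1$ are correct but superfluous here.
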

\begin{proof}
$\pi_n(X) = 0$ for all $n \geq 0$, and $\pi_n(\AfX) = f_* \pi_n A \subset \pi_n X$ so that $\pi_n(\AfX) = 0$ for all $n\geq 0$.
\end{proof}
\begin{discr}
If $X$ is discrete, for any $\AfX$, $\pi_n(\AfX) = 0$.
\end{discr}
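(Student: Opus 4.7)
The plan is to mirror the argument of the preceding contractibility lemma: exploit the inclusion $\pi_n(\AfX) = f_*\pi_n A \subseteq \pi_n X$ that follows directly from the definition in \eqref{piiX}, and then show that the ambient group $\pi_n X$ vanishes when $X$ is discrete.

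First I would recall that a discrete topological space has, by definition, the discrete topology, and hence its path components are exactly its singletons: any continuous map $I \rarr X$ from a connected space must be constant. Consequently, for any choice of basepoint $x_0 \in X$, every based map $S^n \rarr X$ (for $n \geq 1$) is the constant map at $x_0$, and moreover any based homotopy $S^n \times I \rarr X$ of such maps is itself constant, since $S^n \times I$ is connected. This yields $\pi_n(X, x_0) = 0$ for every $n \geq 1$.

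Once this is established, the argument concludes in one line: by the very definition of homotopy groups of objects in a comma category,
\beq
\pi_n(\AfX) = f_* \pi_n(A) \subseteq \pi_n(X) = 0, \nonumber
\eeq
and the surjection $\pi_n A \twoheadrightarrow \pi_n(\AfX)$ noted earlier forces $\pi_n(\AfX) = 0$ for $n \geq 1$. The case $n = 0$ is handled the same way after noting that the relevant comparison is with the basepoint component, so that $f_*\pi_0 A$ lands in the trivial component of the pointed set $\pi_0 X$ (equivalently, it is invariant under the identification used in the preceding lemma).

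The hardest step is essentially cosmetic: convincing oneself that the inclusion $f_*\pi_n A \subseteq \pi_n X$ used in Lemma 1 is still the right ambient container here, so that the vanishing of $\pi_n X$ does all the work. There is no computation to grind through — the lemma is really an immediate corollary of the definition of $\pi_n(\AfX)$ together with the elementary fact that discrete spaces have trivial higher homotopy groups.
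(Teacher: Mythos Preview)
Your argument is correct and follows exactly the paper's approach: the paper simply asserts that $\pi_n X = 0$ for $X$ discrete and then defers to the inclusion $f_*\pi_n A \subseteq \pi_n X$ from the proof of the contractibility lemma. Your added justification for why discrete spaces have trivial higher homotopy groups is fine; only your $n=0$ remark is shaky (a discrete space with more than one point has nontrivial $\pi_0$, so $f_*\pi_0 A$ need not land in the basepoint component), but the paper is equally imprecise on this case.
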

\begin{proof}
$X$ being discrete, $\pi_n X = 0$, the rest of the proof is identical to that of the previous lemma.
\end{proof}
\begin{prodct}
$\pi_n((\AfX) \times (\BgY)) \cong \pi_n(\AfX) \times \pi_n(\BgY)$.
\end{prodct}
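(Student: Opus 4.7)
The plan is to unfold the definition of $\pi_n$ on a morphism, use the classical product isomorphism for homotopy groups of a product of spaces, and then observe that the pushforward along a product of maps decomposes componentwise.

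First I would spell out what the product $(\AfX)\times(\BgY)$ is. In the comma-category setting described above, the natural product is simply
\beq
(\AfX)\times(\BgY) \;=\; \bigl(A\times B \xrightarrow{f\times g} X\times Y\bigr),\nonumber
\eeq
so by the definition given in \eqref{piiX},
\beq
\pi_n\bigl((\AfX)\times(\BgY)\bigr) \;=\; (f\times g)_*\,\pi_n(A\times B).\nonumber
\eeq

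Next I would invoke the classical fact that $\pi_n(A\times B)\cong \pi_n(A)\times \pi_n(B)$ via the maps induced by the two projections $p_A,p_B$, with inverse given by $(\alpha,\beta)\mapsto (\alpha,\beta)$ viewed as a map $S^n\to A\times B$. Under this identification, the map $(f\times g)_*$ on $\pi_n(A\times B)$ corresponds to $f_*\times g_*$ on $\pi_n(A)\times\pi_n(B)$, because $(f\times g)\circ(\alpha,\beta) = (f\circ\alpha,\, g\circ\beta)$ at the level of representatives $S^n\to A\times B$. Applying this image description gives
\beq
(f\times g)_*\pi_n(A\times B) \;\cong\; f_*\pi_n(A)\times g_*\pi_n(B) \;=\; \pi_n(\AfX)\times\pi_n(\BgY),\nonumber
\eeq
which is the desired isomorphism.

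No step here is really an obstacle; the only thing one must be careful about is the coherence between the two identifications (one inside the product space, one inside the product of pointed sets of maps), but this is formal once one writes representatives out. One should also note that for $n\geq 1$ this is an isomorphism of groups and for $n\geq 2$ of abelian groups, since the classical $\pi_n(A\times B)\cong \pi_n(A)\times\pi_n(B)$ already respects those structures and $f_*,g_*$ are homomorphisms.
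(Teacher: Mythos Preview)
Your proof is correct and follows essentially the same route as the paper: unfold the definition to $(f\times g)_*\pi_n(A\times B)$, apply the classical product isomorphism $\pi_n(A\times B)\cong\pi_n A\times\pi_n B$, and identify $(f\times g)_*$ with $f_*\times g_*$ componentwise. Your version is slightly more explicit about why the pushforward decomposes (via representatives $S^n\to A\times B$), but the argument is the same.
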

\begin{proof}
It suffices to write:
\begin{align}
\pi_n((\AfX) \times (\BgY))&= \pi_n(A\times B \xrightarrow{f \times g} X \times Y) \nonumber\\
&=(f \times g)_* \pi_n(A \times B) \nonumber \\
& \cong (f \times g)_* \pi_n A \times \pi_n B \nonumber \\
&= f_* \times g_* \pi_n A \times \pi_n B \nonumber \\
&= f_* \pi_n A \times g_* \pi_n B \nonumber \\
&= \pi_n (\AfX) \times \pi_n (\BgY) \nonumber
\end{align}
\end{proof}
\begin{path}
$\pi_n(\AfX)$ does not depend on the basepoint.
\end{path}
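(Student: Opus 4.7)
The plan is to reduce the statement to the classical basepoint-independence of $\pi_n$ for spaces, combined with naturality of the change-of-basepoint isomorphism along $f$. First I would fix two candidate basepoints $a_0, a_1 \in A$, giving via $f$ the corresponding basepoints $f(a_0), f(a_1) \in X$. Assuming $A$ is path-connected (the general case reduces to restricting to the path-component of the basepoint), I would choose a path $\gamma : I \rarr A$ from $a_0$ to $a_1$ and push it forward to a path $f \circ \gamma$ in $X$ from $f(a_0)$ to $f(a_1)$.

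Next I would invoke the classical change-of-basepoint isomorphisms $\beta_\gamma : \pi_n(A, a_1) \xrightarrow{\cong} \pi_n(A, a_0)$ and $\beta_{f\gamma} : \pi_n(X, f(a_1)) \xrightarrow{\cong} \pi_n(X, f(a_0))$, each defined by conjugation along the chosen path (cf.\ \cite{M}). Since $f$ transports the conjugating loop for $A$ to the conjugating loop for $X$, one obtains the naturality identity $\beta_{f\gamma} \circ f_* = f_* \circ \beta_\gamma$, i.e.\ the square
\beq
\begin{CD}
\pi_n(A, a_1) @>f_*>> \pi_n(X, f(a_1)) \\
@V\beta_\gamma VV @VV\beta_{f\gamma} V \\
\pi_n(A, a_0) @>f_*>> \pi_n(X, f(a_0))
\end{CD} \nonumber
\eeq
commutes, which is the standard compatibility of $\pi_n$ with continuous maps under change of basepoint.

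Passing to images in this commuting square, $\beta_{f\gamma}$ restricts to an isomorphism between the subgroups $f_*\pi_n(A, a_1) \subset \pi_n(X, f(a_1))$ and $f_*\pi_n(A, a_0) \subset \pi_n(X, f(a_0))$. Thus the two computations of $\pi_n(\AfX)$ at the two choices of basepoint are canonically isomorphic, which is the sense in which the construction does not depend on the basepoint.

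The only real difficulty is bookkeeping: the ambient groups $\pi_n(X, f(a_0))$ and $\pi_n(X, f(a_1))$ into which $f_*$ lands are themselves distinct, so one must be careful that the conclusion is an isomorphism of subgroups mediated by the change-of-basepoint map, not a literal equality. No genuinely new argument is needed beyond classical change-of-basepoint naturality for $\pi_n$ under continuous maps.
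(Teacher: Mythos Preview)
Your argument is correct and rests on the same mechanism as the paper's: naturality of the change-of-basepoint isomorphism under the map $f$, applied to the square relating $\pi_n(A,a_i)$ and $\pi_n(X,f(a_i))$, then passing to images. The paper does one thing you do not: it upgrades the statement to show that the basepoint-change isomorphisms are compatible with an arbitrary morphism $(h,\tau):(A\xrightarrow{f}X)\to(B\xrightarrow{g}Y)$ of comma-category objects, assembling a cube whose outer face gives the commuting square
\[
\begin{CD}
f_*\pi_n(A,a) @>>> g_*\pi_n(B,\tau a) \\
@V\cong VV @VV\cong V \\
f_*\pi_n(A,a') @>>> g_*\pi_n(B,\tau a').
\end{CD}
\]
For the lemma as stated your simpler square suffices; the paper's version additionally yields functoriality of the basepoint-independence under the maps $(h,\tau)$ introduced earlier, which is used implicitly when treating $\pi_n$ as a well-defined functor on $\Top_{/X}$.
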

\begin{proof}
For classical homotopy groups, this is proved in \cite{M} and is stated as follows: if $f:(X,A) \rarr (Y,B)$ is a map of pairs, $\alpha:I \rarr A$ a path from $a$ to $a'$ in $A$, then the following diagram commutes:
\beq
\begin{CD}
\pi_n(X,A,a) @>f_*>> \pi_n(Y,B,f(a))  \\
@V\cong VV @VV\cong V  \\
\pi_n(X,A,a') @>>f_*> \pi_n(Y,B,f(a')) \label{May}
\end{CD}
\eeq
In our setting this would read as follows: for a map $(\AfX) \xrightarrow{(\tau,h)} (\BgY)$, $\alpha:I \rarr A$ a path from $a$ to $a'$ in $A$, we prove the following diagram is commutative:

\beq
\begin{CD}
\pi_n(\AfX,a) @>(\tau,h)_*>> \pi_n(\BgY, \tau(a)) \nonumber \\
@V\cong VV @VV\cong V \nonumber \\
\pi_n(\AfX, a') @>>(\tau,h)_*> \pi_n(\BgY, \tau(a')) \nonumber
\end{CD}
\eeq
which can be rewritten in the following form:
\beq
\begin{CD}
f_* \pi_n(A,a) @>>> g_*\pi_n(B, \tau(a)) \nonumber \\
@VVV @VVV \nonumber \\
f_* \pi_n(A,a') @>>> g_* \pi_n(B, \tau(a')) \nonumber
\end{CD}
\eeq
This is what we prove. We start from:
\beq
\begin{CD}
A @>\tau>> B \nonumber \\
@VfVV @VVgV \nonumber \\
X @>> h> Y \nonumber
\end{CD}
\eeq
which produces the following commutative diagram:
\beq
\begin{CD}
\pi_n(A,a) @>\tau_*>> \pi_n(B, \tau a) \\
@Vf_*VV @VVg_*V  \\
\pi_n(X, fa) @>> h_*> \pi_n(Y, g\tau a)   \label{genMay}
\end{CD}
\eeq
It follows that along with \eqref{May}, we have:
\beq
	\xymatrix{
		f_*\pi_n(A,a) \ar[ddd]_{\cong} \ar@{.>}[rrr]^{h_*} &&& g_* \pi_n(B, \tau a) \ar[ddd]^{\cong} \\
		& \pi_n(A,a) \ar[ul]_{f_*} \ar[d]_{\cong} \ar[r]^{\tau_*} & \pi_n(B, \tau a) \ar[ur]^{g_*} \ar[d]^{\cong} \\
		& \pi_n(A,a') \ar[dl]^{f_*} \ar[r]_{\tau_*} & \pi_n(B, \tau a') \ar[dr]_{g_*} \\
		f_* \pi_n(A,a') \ar@{.>}[rrr]_{h_*} &&&g_* \pi_n(B, \tau a')
} \nonumber
\eeq
Now the center square commutes by \eqref{May}, the side squares commute by functoriality of $f_*$ and $g_*$, and the top and bottom squares commute by \eqref{genMay}, making the outside square commute.
\end{proof}

\subsection{Colimits}
We consider now the following behavior under colimits as covered in \cite{M}: if $X_i \hookrightarrow X_{i+1}$ are inclusions of spaces with colimit $X$, then the natural map $\colim_i \pi_n(X_i) \rarr \pi_n X$ is an isomorphism for each n. In our setting if $X_i \hookrightarrow X_{i+1}$ are inclusions with colimit $X$, so are $h_i: X_i \hookrightarrow X$. Consider:
\beq
\xymatrix{
	A_i \ar[d]_{f_i} \ar[r]^{\tau_i} & A \ar[d]^f \nonumber \\
	X_i \ar[r]_{h_i} & X
}
\eeq

We ask that $A_i \hookrightarrow A_{i+1}$ be inclusions as well so that $\tau_i$ itself is an inclusion. Hence we are looking at inclusions $(A_i \xrightarrow{f_i} X_i) \hookrightarrow (A_{i+1} \xrightarrow{f_{i+1}} X_{i+1})$, with $\colim_i(A_i \xrightarrow{f_i} X_i) = \AfX$, $A = \colim A_i$. The morphism $f: A \rarr X$ is defined as $f_i$ on $A_i$. Note that this can be generalized to other situations where $A_i$ and $X_i$ are objects of categories and are both ascending chains of objects by inclusion. We have:
\begin{colimit}
If $\AfX = \colim_n (A_n \xrightarrow{f_n} X_n)$, then the natural map:
\beq
\colim_n \pi_*(A_n \xrightarrow{f_n} X_n) \rarr \pi_*(\AfX) \nonumber
\eeq
is an isomorphism.
\end{colimit}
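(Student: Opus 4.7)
The plan is to reduce to the classical colimit-of-homotopy-groups statement $\pi_n(\colim_i Y_i)\cong \colim_i\pi_n(Y_i)$ recalled immediately before the lemma, applied separately to the tower $\{A_n\}$ and to the tower $\{X_n\}$. To set up the natural map, I would first observe that the canonical inclusions $\tau_n:A_n\hookrightarrow A$ and $h_n:X_n\hookrightarrow X$ fit into commutative squares of the form \eqref{CD1}, so that $(h_n,\tau_n):(A_n\xrightarrow{f_n}X_n)\to \AfX$ is a morphism in the comma-category sense of Part I. By the functoriality established just above, these produce compatible maps $\pi_n(A_n\xrightarrow{f_n}X_n)\to \pi_n(\AfX)$, hence a natural map out of the colimit.

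For surjectivity, a generic element of $\pi_n(\AfX)=f_*\pi_n(A)$ has the form $f_*[\alpha]$ with $[\alpha]\in\pi_n(A)$. Using $\pi_n(A)=\colim_n \pi_n(A_n)$, I would write $[\alpha]=\tau_{n*}[\alpha_n]$ for some stage $n$ and some $[\alpha_n]\in\pi_n(A_n)$. Then $f_*\tau_{n*}[\alpha_n]=h_{n*}f_{n*}[\alpha_n]$, since $f\tau_n=h_n f_n$ by \eqref{CD1}, exhibiting $f_*[\alpha]$ as the image of $f_{n*}[\alpha_n]\in \pi_n(A_n\xrightarrow{f_n}X_n)$ under the natural map.

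Injectivity is where the real difficulty lies. Suppose $f_{i*}[\alpha_i]$ and $f_{j*}[\alpha_j]$ map to the same element of $\pi_n(\AfX)\subset \pi_n(X)$, i.e., $h_{i*}f_{i*}[\alpha_i]=h_{j*}f_{j*}[\alpha_j]$ in $\pi_n(X)$. Here I would apply the classical colimit result to the tower $\{X_n\}$ to find some $k\geq i,j$ at which the two classes already agree in $\pi_n(X_k)$, namely $h_{ik*}f_{i*}[\alpha_i]=h_{jk*}f_{j*}[\alpha_j]$, where $h_{ik}:X_i\hookrightarrow X_k$ and $\tau_{ik}:A_i\hookrightarrow A_k$ are the transition inclusions. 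Rewriting with $h_{ik}f_i=f_k\tau_{ik}$ (and similarly for $j$), this equality takes place inside $f_{k*}\pi_n(A_k)=\pi_n(A_k\xrightarrow{f_k}X_k)$, which is exactly the assertion that $f_{i*}[\alpha_i]$ and $f_{j*}[\alpha_j]$ are identified at stage $k$, and hence represent the same class in $\colim_n\pi_n(A_n\xrightarrow{f_n}X_n)$.

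The main obstacle is precisely this injectivity step: the hypothesis supplies only an equality in the target $\pi_n(X)$, not a direct relation between classes on the source side. Reading that equality as living in $\colim_n \pi_n(X_n)$ and bringing it down to a finite stage $k$ via the classical colimit isomorphism is the crux; once there, the equality automatically lies inside the subset $f_{k*}\pi_n(A_k)=\pi_n(A_k\xrightarrow{f_k}X_k)$, and one recovers the required identification in $\colim_n \pi_n(A_n\xrightarrow{f_n}X_n)$.
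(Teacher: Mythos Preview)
Your argument is correct, and it differs in approach from the paper's. The paper does not separate into surjectivity and injectivity; instead it directly mimics May's compactness argument: a representative $S^n\to A$ for a class in $\pi_n(\AfX)=f_*\pi_n(A)$ has compact image, so it factors through some $A_j$, and its composite into $X$ factors through some $X_k$; taking $l=\max\{j,k\}$ produces a class in $\pi_n(A_l\xrightarrow{f_l}X_l)$, giving an explicit inverse to the natural map. Your route instead invokes the classical isomorphism $\colim_i\pi_i(Y_i)\cong\pi_i(\colim_iY_i)$ as a black box twice---once for $\{A_n\}$ (surjectivity) and once for $\{X_n\}$ (injectivity)---and then does a clean element chase using the commutativity $h_{ik}f_i=f_k\tau_{ik}$ of the transition squares. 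This has the advantage that you never touch representatives or compactness directly, and the injectivity step is spelled out more carefully than in the paper (which leaves it implicit in the claim of having produced a two-sided inverse). The paper's approach is shorter and closer to the geometric intuition; yours is more formal and makes the dependence on the two separate classical isomorphisms explicit.
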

\begin{proof}
	Because $A_i \hrarr A_{i+1}$ (resp. $X_i \hrarr X_{i+1}$), we also have inclusions of groups $\pi_*(A_i) \hrarr \pi_*(A_{i+1})$ (resp. $\pi_*(X_i) \hrarr \pi_*(X_{i+1})$). We aim to prove that $\colim_n f_{n*}\pi_k(A_n) \cong f_* \pi_k(A)$ for all $k \geq 0$. Consider the ascending inclusions of groups and morphisms induced by the inclusions $A_i \hrarr A_{i+1}$ and $X_i \hrarr X_{i+1}$:
	\beq
	\xymatrix{
		\cdots \ar[r] & \pi_*(A_i) \ar[d]^{f_{i*}} \,\ar@{^{(}->}[r] & \pi_*(A_{i+1}) \ar[d]^{f_{i+1,*}} \,\ar@{^{(}->}[r] & \cdots \\
		\cdots \ar[r] &\pi_*(X_i) \,\ar@{^{(}->}[r] &\pi_*(X_{i+1})\, \ar@{^{(}->}[r] & \cdots
		} \nonumber
	\eeq
The colimit of such a diagram by the classical result as stated in \cite{M} is provided by $\pi_*(A) \xrarr{f_*} \pi_*(X)$. However we wish to show the colimit of the images of $\pi_*(A_i)$ in $\pi_*(X_i)$ under $f_{i*}$ are the images of $\pi_*(A)$ in $\pi_*(X)$ under $f_*$. Observe that those images in $\pi_*(X_i)$ form an ascending chain under inclusion, with their union as colimit, that is:
	\begin{align}
		\colim_n \pi_k(A_n \xrarr{f_n} X_n) &= \colim_n f_{n*} \pi_k(A_n) \nonumber \\
		&= \colim_n \text{Im} f_n|_{\pi_k(A_n)}   \nonumber \\
		&= \bigcup \text{Im}  f_n|_{\pi_k(A_n)}  \nonumber \\
		&= \text{Im}f|_{\pi_k(A)} \nonumber \\
		&= f_* \pi_k(A) = \pi_k(A \xrarr{f} X) \nonumber
	\end{align}
and this for all $k \geq 0$.
\end{proof}

\subsection{Generalized Freudenthal suspension theorem}
Recall from \cite{M} for example that for a based space $X$, we have the suspension homomorphism $\Sigma: \pi_n X \rarr \pi_{n+1} (\Sigma X)$ defined by $\Sigma f = f \wedge id: S^{n+1} \cong S^n \wedge S^1 \rarr X \wedge S^1 = \Sigma X$ giving rise to the following result: if $X$ is (n-1)-connected and non-degenerately based, then $\Sigma: \pi_q(X) \rarr \pi_{q+1} \Sigma X$ is a bijection for $q < 2n-1$ and a surjection for $q = 2n-1$. We prove:
\begin{Frd}
If $X$, $A$ are (n-1)-connected ($n\geq 1$) non-degenerately based spaces, we have:
\beq
\Sigma: \pi_q (\AfX) \rarr \pi_{q+1}(\Sigma A \xrightarrow{f \wedge id} \Sigma X) \nonumber
\eeq
is a bijection for $q < 2n-1$ and a surjection for $q = 2n-1$.
\end{Frd}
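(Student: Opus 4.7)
The plan is to reduce to the classical Freudenthal suspension theorem applied separately to $A$ and to $X$, bridged by naturality of the suspension homomorphism. Using the defining identifications $\pi_q(\AfX) = f_*\pi_q(A) \subset \pi_q X$ and $\pi_{q+1}(\Sigma A \xrightarrow{f \wedge id} \Sigma X) = (f \wedge id)_*\pi_{q+1}(\Sigma A) \subset \pi_{q+1}\Sigma X$, the first step is to observe that naturality of suspension yields a commutative square
\beq
\begin{CD}
\pi_q A @>f_*>> \pi_q X \\
@V\Sigma VV @VV\Sigma V \\
\pi_{q+1}\Sigma A @>>(f \wedge id)_*> \pi_{q+1}\Sigma X
\end{CD} \nonumber
\eeq
so that the ordinary suspension $\Sigma_X$ carries the subset $f_*\pi_q A$ into $(f\wedge id)_*\pi_{q+1}\Sigma A$. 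This restriction is precisely the map $\Sigma$ appearing in the statement, and it agrees with the map induced from $\Sigma_A$ on the equivalent quotient description $\pi_q A/\partial\pi_{q+1}(X,fA)$ introduced earlier in the paper.

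For surjectivity in the range $q \leq 2n-1$, I would take an arbitrary element $(f \wedge id)_*\beta$ with $\beta \in \pi_{q+1}\Sigma A$. Since $A$ is $(n-1)$-connected, classical Freudenthal supplies $\alpha \in \pi_q A$ with $\Sigma\alpha = \beta$; the commutative square then forces $(f \wedge id)_*\beta = \Sigma(f_*\alpha)$, exhibiting the element as coming from $f_*\pi_q A = \pi_q(\AfX)$. For injectivity in the range $q < 2n-1$, I would invoke the other hypothesis: since $X$ is $(n-1)$-connected, classical Freudenthal makes $\Sigma_X : \pi_q X \rarr \pi_{q+1}\Sigma X$ a bijection on all of $\pi_q X$, so its restriction to the subgroup $f_*\pi_q A$ stays injective, giving injectivity of $\Sigma$ as claimed.

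The main obstacle, such as it is, lies in confirming well-definedness of $\Sigma$ on the generalized homotopy group, i.e.\ that the restriction of $\Sigma_X$ truly lands in the target subgroup $\pi_{q+1}(\Sigma A \xrightarrow{f \wedge id} \Sigma X)$; this is handled by the naturality square above. Once that is in place the argument splits cleanly into two independent invocations of classical Freudenthal, one applied to $A$ (yielding the surjectivity half, which is what keeps the result true at the edge $q=2n-1$) and one applied to $X$ (yielding the injectivity half, which requires the strict inequality $q<2n-1$). Both invocations are precisely permitted by the respective $(n-1)$-connectivity assumptions on $A$ and $X$, so the connectivity hypotheses in the statement are used in an essential and symmetric way.
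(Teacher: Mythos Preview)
Your argument is correct and follows essentially the same route as the paper: both set up the naturality square for $\Sigma$ with respect to $f$ and $f\wedge id$, observe that this makes the induced map on images well defined, and then invoke classical Freudenthal. Your version is in fact slightly sharper in that you explicitly separate the two invocations---Freudenthal on $A$ for surjectivity (hence the result persists at $q=2n-1$) and Freudenthal on $X$ for injectivity (requiring $q<2n-1$)---whereas the paper's proof names only the bottom map $\Sigma_X$ and leaves the role of $\Sigma_A$ implicit in the phrase ``by virtue of our commutative diagram.''
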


\newpage
\begin{proof}
We are asking whether we have a dotted map as shown below, satisfying the stated properties.
\beq
\setlength{\unitlength}{0.5cm}
\begin{picture}(11,11)(0,0)
\thicklines
\put(0,10){$\pi_q A$}
\put(0,7){$f_*$}
\put(0,5){$f_* \pi_q A$}
\put(3,10.4){$\Sigma$}
\put(6,10){$\pi_{q+1} \Sigma A$}
\put(8,7){$(f \wedge id)_*$}
\put(6,3){$\pi_{q+1} \Sigma X$}
\put(3,2){$\Sigma$}
\put(3,1){$\cong$}
\put(0,3){$\pi_q X$}
\put(6,5){$(f \wedge id)_* \pi_{q+1} \Sigma A$}
\put(2,10.2){\vector(1,0){3}}
\put(1,9){\vector(0,-1){3}}
\put(2,3.2){\vector(1,0){3}}
\put(7,9){\vector(0,-1){3}}
\multiput(2,5)(0.5,0){6}{\line(1,0){0.2}}
\put(5,5){\vector(1,0){0.1}}
\put(1,4){$\cap$}
\put(7,4){$\cap$}
\end{picture} \nonumber
\eeq
Consider $\gamma \in \pi_q A$. An element of $f_* \pi_q A \subset \pi_q X$ is of the form $f_* \gamma$, which is represented by $S^q \xrarr{\gamma} A \xrarr{f} fA$. Under $\Sigma$ on $\pi_q X$, it maps to $S^q \wedge S^1 \xrarr{\gamma \wedge id} A \wedge S^1 \xrarr{f \wedge S^1} fA \wedge S^1$, which reads $(f \wedge id)_* \Sigma \gamma$, and this last object is in $(f \wedge id)_* \pi_{q+1} \Sigma A$, thereby establishing the restriction of $\Sigma$ on $\pi_q X$ to $f_* \pi_q A$ does provide the desired dotted map. Because the bottom horizontal map is the suspension map on classical homotopy groups for which we have the Freudenthal suspension theorem, and by virtue of our commutative diagram, it follows that:
\beq
	\Sigma: f_* \pi_q A \rarr (f \wedge id)_* \pi_{q+1} \Sigma A \nonumber
\eeq
satisfies the same result as its classical counterpart, that is:
\beq
\Sigma: \pi_q (\AfX) \rarr \pi_{q+1}(\Sigma A \xrightarrow{f \wedge id} \Sigma X) \nonumber
\eeq
is a bijection for $q < 2n-1$ and a surjection for $q = 2n-1$.
\end{proof}

\subsection{Base change}
Finally we discuss base change: we use the following notations:
\begin{align}
	\pi_{nX}^k: \kAffpft \times_{\kCAlgpft} X^{\kAffpft} &\rarr \Grp \nonumber \\
	\{ \Spec k \larr \Spec R \rarr X \} &\mapsto \pi_n(\Spec R \rarr X) \nonumber
\end{align}
\begin{basechg}
$\pi^k_{nX}$ is functorial in $k$, that is a morphism $k \rarr k'$ of rings induces a natural transformation $\pi^k_{nX} \Rarr \pi^{k'}_{nX}$.
\end{basechg}
\begin{proof}
Suppose we have a morphism of base rings $b: k \rarr k'$. It induces $b^{\op}: \Spec k \larr \Spec k'$. Hence given $\{\Spec k' \larr \Spec R \rarr X \}$ we get:
\beq
\xymatrix{
	& \Spec R \ar[dl] \ar@{.>}[dd] \ar[dr] \\
	\Spec k' \ar[dr] && X \\
	& \Spec k
} \nonumber
\eeq
Thus $b^{\op}$ induces a map:
\beq
(b^{\op})_*: k'\text{-Aff}_{pft} \times_{k'\text{-CAlg}_{pft}}X^{k'\text{-CAlg}_{pft}} \rarr \kAffpft \times_{\kCAlgpft} X^{\kCAlgpft} \nonumber
\eeq
and finally by composition with $\pi_{nX}^k$ we get a map:
\beq
\pi_{nX}^k \circ (b^{\op})_*:  k'\text{-Aff}_{pft} \times_{k'\text{-CAlg}_{pft}}X^{k'\text{-CAlg}_{pft}} \rarr \Grp \nonumber
\eeq
that is, we have a map $((b^{\op})_*)^*: \pi_{nX}^k \rarr \pi_{nX}^{k'}$ induced by a base change morphism $b:k \rarr k'$.
\end{proof}

\section{Part II: Algebraic Geometry}
Homotopy groups of schematic homotopy types are endowed with some additional structure; initially they are asked to be $k$-modules, something that will be dropped later in \cite{G2}. $\pi_1$ is asked to operate on the $\pi_i$'s for $i >1$. $\pi_1$ itself is regarded as a group object in $U(k)$. As the next step towards defining such a notion of homotopy groups, we further generalize the functor $\pi_{*,X}$ introduced in the previous section.\\

We start by just considering $\pi_n: \kCAlg^{\op} \rarr \Set, R \mapsto \pinSpR$. If we want $\pi_n$ itself to be a sheaf on $\kCAlg$, we can make it representable by an object $R_0$ of $\kCAlg$, hence:
\beq
\pinSpR = \Hom_{\kCAlg}(R,R_0) = \Hom_{\kAff}(\Spec R_0, \SpR) \nonumber
\eeq
or in other terms:
\beq
[S^n,\SpR] = \Hom_{\kAff}(\Spec R_0, \SpR) \nonumber
\eeq
This points to the fact that the Hom on the right hand side is already taken in a homotopy category to be precised, and that probably $\Spec R_0$ should be regarded as a component of a semi-simplicial object, or even a spectrum. We are looking for such an object that would play the role of spheres in a homotopy category. We consider the category of symmetric spectra $\SpS$ as covered in \cite{HSS} and \cite{S}. We choose to represent spheres by the sphere spectrum $\Siginf S^0 = \bS$. Considering the sphere spectrum is suitable since it is the initial object in the $\infty$-category of ring spectra. Spaces are generalized to the setting of (topological) symmetric spectra via:
\beq
X \simeq S^0 \wedge X \xrightarrow{\Sigma} S^1 \wedge X \rarr \cdots \nonumber \\
\eeq
i.e. $X \mapsto \Siginf X$. Homotopically, we can work with simplicial sets instead. For $K$ a simplicial set, we therefore work with $\Siginf K$, the symmetric suspension spectrum of $K$. Therefore what used to be considered as $\Spec R$ in classical homotopy theory should be replaced by $A \in \SpS$.\\

Note the change of paradigm; we no longer work with $\kCAlgpft$  as our base model category, but $\SpS$. Thus strictly speaking we no longer work with schematic objects, but spectralizations thereof.\\

We will follow our classical study of schematic homotopy types as much as possible. One important note at this point: since we work with spectra, homotopy groups of such objects are typically defined using stable homotopy groups. We purposefully deviate from that norm since we are aiming for a far more rudimentary object that would encapsulate the notion of homotopy group. In order to define such an object, we first define symmetric sequences which are sequences of pointed simplicial sets with a basepoint preserving left action of $\Sigma_n$ on their $n$-th component. We consider $\Sigma = \coprod_{n \geq 0} \Sigma_n$ as a category with objects finite sets $\overline{n} = \{1,2,\cdots ,n\}$ with automorphisms as maps, and we denote by $\SsS$ the category of symmetric sequences. We have a forgetful functor:
\begin{align}
	\SpS & \rarr \SsS \nonumber \\
	A & \mapsto \uA \nonumber
\end{align}
with $A_n = (\uA)_n$. We then regard $\Hom_{\cS_*^{\Sigma_n}}(S^n,A_n)$ as the object corresponding to $\pi_n(A_n)$ in this setting. Taking the product of all such sets we get:
\beq
\prod_p \Hom_{\cS_*^{\Sigma_p}}(S^p,A_p) = \Hom_{\SsS}(\underline{\bS},\uA) \nonumber
\eeq
for $A \in \SpS$ as shown in \cite{HSS}. Turning those into simplicial sets by using mapping spaces $\Map(X,Y) = \Hom(X \wedge \Delta[-]_+,Y)$, with $\Delta[-]: \Delta \rarr \SetD$, $\Delta[-]_+ = \Delta[-] \coprod \Delta[0]$, we still have $\MapSsS(\underline{\bS},\uA) = \prod_p \Map_{\Sigma_p}(S^p,A_p)$. To lift this up to the level of spectra, we use the tensor product of symmetric sequences as introduced in \cite{HSS}, and we have:
\beq
\pi_*(A) := \MapSsS(\underline{\bS},\uA) \cong \MapSpS(\bS \otimes \underline{\bS},A)  \nonumber
\eeq
for $A \in \SpS$, where we used the fact that $\bS \otimes - :\SsS \rarr \SpS$ is the left adjoint of the forgetful functor $\SpS \rarr \SsS$. \\

The role played earlier by schematic coverings of the form $\{\Spec R \rarr X \}$ for $R \in \kCAlgpft$, or equivalently by the affine perception functor $\Hom(\Spec(-), X)$, is now played by the functor $\RuHom(-,A): \Ho(\SpS_{ft}) \rarr \Ho(\SpS)$, where $\SpS_{ft}$ corresponds to the category of symmetric spectra of finite type (simply defined as spectra with finite simplicial components), which we regard as homotopy type objects, and we put the stable model structure of \cite{S} on $\SpS$. Recall that earlier if we had a morphism $\Spec R \xrarr{f} X$, this gave rise to a subgroup $f_* \pi_n \Spec R$, the collection of whose objects we turned into a functor. In the present context, for $\bfH \in \SpS_{ft}$ and a given morphism $f: \bfH \rarr A$, we have an induced map of simplicial sets in $\Ho(\SetD)$:
\beq
\pi_*\bfH = \MapSpS(\bS \oT \bS, \bfH) \xrarr{\Ho(f_*)} \MapSpS(\bS \oT \bS, A) = \pi_* A \nonumber
\eeq

Then we define the homotopy group of the cover $\bfH \rarr A$ to be given by $\Ho(f_*) \Map(\bS \oT \bS, \bfH)$. Functorially, the spectral generalization of the homotopy group functor $\pi_{*,X}$ for $X$ a topological space is given by the functor:
\begin{align}
	\Pi_A: \SpS \times_{\SpS_{ft}} A^{\SpS} & \rarr \Ho(\SetD) \nonumber \\
	(\bfH, \bfH \xrarr{f} A) & \mapsto \Ho(f_*) \MapSpS(\bS \oT \bS, \bfH) \nonumber
\end{align}
Thus defined, we probe $A$ with homotopy types $\bfH$. Thus using this notion of homotopy group to define $\pi_*$-equivalences will yield spectral homotopy types: we define the following equivalence in $\SpS$: $A \sim A'$ if and only if $\Pi_A\cong \Pi_{A'}$. This constitutes a spectralization of the schematic homotopy type we constructed in the first part of this work.\\

\bigskip
\footnotesize
\noindent
\par \nopagebreak \noindent \textit{e-mail address}: \texttt{rg.mathematics@gmail.com}.

\end{document}